\newtheorem{theorem}{Theorem}[section]
\newtheorem{proposition}{Proposition}[section]
\newtheorem{definition}[theorem]{Definition}
\newtheorem{remark}[theorem]{Remark}
\numberwithin{equation}{section}
\newcommand{\mbr}{{\mathbb R}}
\newcommand{\mM}{{\mathcal{M}}}
\newcommand{\mF}{{\mathcal {F}}}
\title{Darboux chart on Projective limit of weak symplectic Banach manifold}
\author{Pradip Kumar }
\address{Department of Mathematics,
 Harish Chandra Research Institute\\ Allahabad 211019, Uttarpradesh\\ India} \email{pmishra@hri.res.in}
\subjclass[2010]{53D35, 55P35}
\keywords{symplectic structures, Fr\'{e}chet manifold, darboux chart.}
\begin{document}

\maketitle
\begin{abstract}
Suppose $M$ be the projective limit of weak symplectic
Banach manifolds $\{(M_i,\phi_{ij})\}_{i,j\in\mathbb N}$, where $M_i$ are modeled over reflexive Banach space and $\sigma$ is  compatible with the projective system (defined in the article).
We associate to each point $x\in M$, a Fr\'{e}chet space $H_x$.   We prove that
if $H_x$ are locally identical, then with certain smoothness and  bounded-ness condition, there exists a Darboux chart for the weak symplectic structure.
\end{abstract}
\section{Introduction}
 We say $M$ is the projective limit of Banach manifolds
(PLB-Manifold) modeled on a PLB-space $E=\varprojlim E_i$
if we have followings.
\begin{enumerate}
\item There is a projective system of Banach manifolds $\{M_i, \phi_{ji}\}_{i,j\in\mathbb N}$ such that $M= \varprojlim M_i$.
\item For each $p\in M$, we have $p= (p_i)$. $p_i\in M_i$, and there is a chart $(U_i,\psi_i)$ of $p_i\in M_i$ such that
    \begin{enumerate}
    \item $\phi_{ji}(U_j)\subset U_i$, $j\geq i$.
    \item  Let $\{E_i,\rho_{ji}\}_{i,j\in\mathbb N}$ be a projective systems of Banach spaces, where each $\rho_{ji}$ is
    inclusion map
     and the diagram $$\begin{CD}
U_j @>\psi_j>> \psi_j(U_j)\\
@VV\phi_{ji}V @VV\rho_{ji}V\\
U_i @>\psi_i>> \psi_i(U_i)
\end{CD}$$
 commutes.
\item $\varprojlim\psi_i(U_i)$ is open in $E$ and $\varprojlim U_i$ is open in $M$ with the inverse limit topology.
\end{enumerate}
\end{enumerate}
The space $M$ satisfying the above properties has a natural Fr\'{e}chet manifold structure.
 The differential structure on $M$ is determined by the co-ordinate map
 $\psi: U=\varprojlim U_i\to \psi(U)= \varprojlim\psi_i(U_i)$. Therefore a smooth structure on these type of manifolds is completely determined by the smooth structure on the sequence.  George Galanis in \cite{galanis1,galanis2,galanis3,galanis4} has studied this type of manifolds and smooth structure on it.

G. Galanis, in a series of articles \cite{galanis1,galanis2,galanis3,galanis4} discussed various properties of PLB-manifolds.

Suppose $M$ is endowed with a weak symplectic structure $\sigma$ and each $M_i$ is endowed with a weak symplectic structure $\sigma^i$.  In section 3, we will define the notion of compatible symplectic structure on PLB manifold with projective symstem.

A 2 form $\sigma\in \Omega^2(\mM)$
is called a weak symplectic form on $\mM$
if it is closed ($d\sigma=0$) and if
the associated vector bundle homomorphism
$\sigma^b: T\mM\to T^*\mM$ is injective. A 2 form $\sigma$ on a Banach manifold is called a strong symplectic on
$\mM$ if it is closed ($d\sigma=0)$ and its associated
vector bundle homomorphism
$\sigma^b: T\mM\to T^*\mM$ is invertible
with smooth inverse.

In the  case of strong symplectic Banach manifold, the vector bundle $TM$ has reflexive fibers $T_x\mM$.

For a Fr\'{e}chet manifold with strong topology
$T_x^*\mM$ is not topologically isomorphic to $
T_x\mM$.  Hence in PLB manifold we  have only weak symplectic structure.

In 1969, for a strong symplectic Banach manifold,  Weinstein \cite{weinstein} proved that there exists Darboux chart.  In 1972, Marsden \cite{marseden} showed that the Darboux theorem fails for a weak symplectic Banach manifold.  In 1999 Bambusi \cite{bambusi} gave a necessary and sufficient condition for existence of  Darboux charts for weak symplectic Banach manifold (in the case when model space are reflexive).

In this article, we found sufficient  conditions such that a weak symplectic structure on a PLB- manifolds admit Darboux chart.

Our idea is similar to the idea of Weinstein\cite{weinstein} and  Bambusi\cite{bambusi}. In this article we  develop and extend the main ideas of Bambusi\cite{bambusi} in the context of PLB-manifold.   This article is arranged as follow.

In section 2, we will discuss PLB-manifolds and in section 3,  we will define the compatible symplectic structure. In section 5,  we will give condition for the existence of Darboux chart in PLB-manifolds whose symplectic structure is compatible with the projective system.

\textbf{Acknowledgement:} I wish to thank Prof. George N. Galanis for his continuous interest in this work and his valuable suggestions and providing the references for completing this article.

\section{Preliminaries}
In this section we will revise the basics about PLB manifold. Basic definitions which will be used in later section is given in this section.

\subsection{Smooth maps on PLB space}\label{sec:smooth PLB space}
Suppose $\{E_i,\phi_{ji}\}_{i,j\in\mathbb N}$ and
$\{F_i,\rho_{ji}\}_{i,j\in\mathbb N}$ be the
projective system of Banach spaces and
$E= \varprojlim E_i$ and $F=\varprojlim F_i$.  $E$ and
$F$ are Fr\'{e}chet space.

\begin{definition}\label{defn:projective system of mapping}[Projective system of mapping].
We say $\{f_i: E_i\to F_i\}_{i\in\mathbb N}$ is a projective system of mapping if the following diagram commutes.
     $$\begin{CD}
E_j @>f_j>> F_j\\
@VV\phi_{ji}V @VV\rho_{ji}V\\
E_i @>f_i>> F_i.
\end{CD}$$
\end{definition}
We denote the canonical mapping of $E\to E_i$ by $e_i$ and $F\to F_i$ by $e_i'$.
\begin{definition}\label{defn:the projective limit of a system}
We say $f: E\to F$ is the projective limit of a system $\{f_i: E_i\to F_i\}_{i\in\mathbb N}$ if for each $i$,
the following diagram commutes.
     $$\begin{CD}
E_i @>f_i>> F_i\\
@VVe_iV @VVe_i'V\\
E @>f>> F.
\end{CD}$$
\end{definition}
We define the map $\varprojlim f_i$ as following.

If $\{f_i\}$ is the projective system of mappings
then we see that for any $x=(x_i)\in \varprojlim E_i$,
$(f_i(x_i))\in\varprojlim F_i$.
We define $(\varprojlim f_i)(x)= (f_i(x_i))\in F$.
If $f$ be the projective limit  of system $\{f_i\}$ then we have
$$f(x)= (f_i(x_i)).$$
We denote the projective limit of system $\{f_i\}$ by $\varprojlim f_i$.
Also $\left(\varprojlim f_i\right)(x)= (f_i(x_i))= f(x)$.

G. Galanis in \cite{galanis1} has given the following criterion for checking smoothness of the map $f: E\to F$ such
that $f=\varprojlim f_i.$

\begin{theorem}[lemma 1.2,\cite{galanis1}]\label{thm:galsmoothness}
Suppose $E= \varprojlim E_i$ and $F=\varprojlim F_i$ and $\{f_i: E_i\to F_i\}_{i\in\mathbb N}$ be a projective system of
smooth mapping then the following  holds.
\begin{enumerate}
\item $f$ is $C^\infty$, in the sense of J. Leslie.\cite{Leslie}
\item $df(x)= \varprojlim df_i(x_i)$, $x=(x_i)\in E$.
\item $df= \varprojlim df_i$.
\end{enumerate}
\end{theorem}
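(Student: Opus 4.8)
The plan is to reduce everything to the defining property of the projective limit topology: a map with values in $F=\varprojlim F_i$ is continuous, and differentiable in the sense of Leslie, as soon as each of its components $e_i'\circ f$ is so, and here $e_i'\circ f=f_i\circ e_i$ by Definition \ref{defn:the projective limit of a system}. Thus the whole argument is an induction on the order of differentiation, the base case being the verification that $f$ is $C^1$ with the stated derivative.

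For the base case, fix $x=(x_i)\in E$ and $v=(v_i)\in E$ and examine the difference quotient. Since $e_i'$ is continuous and linear,
$$e_i'\!\left(\frac{f(x+tv)-f(x)}{t}\right)=\frac{f_i(x_i+tv_i)-f_i(x_i)}{t}\ \xrightarrow{\,t\to 0\,}\ df_i(x_i)(v_i),$$
because each $f_i$, being smooth between Banach spaces, is in particular Gâteaux differentiable. Limits in $\varprojlim F_i$ are taken componentwise (the projective limit topology is the subspace topology induced from $\prod F_i$, and Leslie's notion of $C^1$ asks only for existence of the directional derivative, with no uniformity in $t$), so it remains to check that the candidate limit $(df_i(x_i)(v_i))_i$ actually lies in $F=\varprojlim F_i$. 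This is where the projective system hypothesis enters: differentiating the commuting square of Definition \ref{defn:projective system of mapping} via the chain rule and using that $\phi_{ji},\rho_{ji}$ are linear gives $\rho_{ji}\circ df_j(x_j)=df_i(\phi_{ji}(x_j))\circ\phi_{ji}$; since $\phi_{ji}(x_j)=x_i$ and $\phi_{ji}(v_j)=v_i$ we obtain $\rho_{ji}\big(df_j(x_j)(v_j)\big)=df_i(x_i)(v_i)$. Hence $\{df_i(x_i)\}_i$ is itself a projective system of continuous linear maps, $\varprojlim df_i(x_i)$ is a well-defined continuous linear map $E\to F$, and the computation above identifies it with $df(x)$, giving items (2) and (3). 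Joint continuity of $(x,v)\mapsto df(x)(v)$ from $E\times E$ to $F$ follows once more componentwise, since $e_i'\circ df=df_i\circ(e_i\times e_i)$ is continuous ($f_i$ is $C^1$ and the $e_i$ are continuous).

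For the inductive step, observe that $\{df_i:E_i\times E_i\to F_i\}_i$ is again a projective system of smooth maps, now over the product projective systems $\{E_i\times E_i,\ \phi_{ji}\times\phi_{ji}\}$ and $\{F_i,\rho_{ji}\}$ — the required commuting square being exactly the chain-rule identity obtained above — and by the base case its projective limit is precisely $df$. Since each $f_i$ is $C^\infty$, each $df_i$ is $C^\infty$, so the inductive hypothesis applies and shows $df$ is $C^1$; iterating, $f$ is $C^\infty$ in Leslie's sense, all of its derivatives arising as projective limits of the corresponding derivatives of the $f_i$. I expect the only real subtlety to be bookkeeping: matching Leslie's definition of $C^1$ to the componentwise argument, and propagating the product projective-system structure $\{E_i\times E_i\}$ correctly through each stage of the induction. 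The analytic content is light, because passing to projective limits converts all the hard estimates into the componentwise Banach-space statements already available.
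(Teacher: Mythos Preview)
The paper does not actually prove this statement: Theorem~\ref{thm:galsmoothness} is quoted from \cite{galanis1} (as ``lemma~1.2'') and used as a black box, with the subsequent paragraph only explaining why Leslie smoothness implies Kriegl--Michor smoothness in this setting. So there is no in-paper proof to compare against.

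That said, your argument is sound and is the natural one. The two ingredients you isolate are exactly what is needed: (i) convergence and continuity in $F=\varprojlim F_i$ are detected componentwise because $F$ carries the initial topology with respect to the projections $e_i'$, and (ii) differentiating the commuting square $\rho_{ji}\circ f_j=f_i\circ\phi_{ji}$ shows $\{df_i(x_i)\}$ is again a projective system of linear maps, so the componentwise directional limits assemble into an element of $F$ and define $\varprojlim df_i(x_i)$. The inductive step---replacing $\{f_i\}$ by $\{df_i:E_i\times E_i\to F_i\}$ and iterating---is clean and matches how Galanis's result is typically argued. One small point of care: Leslie's $C^1$ requires joint continuity of $(x,v)\mapsto df(x)v$, not merely existence of directional derivatives; you do address this via $e_i'\circ df=df_i\circ(e_i\times e_i)$, which is the right move since continuity into a projective limit is checked componentwise.
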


We mention here that in the PLB-space if a map $f=\varprojlim f_i$ as in theorem \ref{thm:galsmoothness} is smooth in the sense of J. Leslie then it will be smooth in the sense of Kriegl and Michor too.  This can be seen as following.

Let $c:\mbr\to \varprojlim U_i\subset E$ be a smooth curve and $f:=\varprojlim f_i:  \varprojlim U_i\to E$
be a map which is smooth in the sense of J. Leslie (i.e $f$ satisfies theorem \ref{thm:galsmoothness}).
We can identify $c(t)$ as  $c(t)= (c_i(t))$ where $\phi_{ji}(c_j(t))= c_i(t)$.  $c$ is smooth if and only each
$c_i$ is smooth (here we are using the fact that $\pi_i: \varprojlim E_i\to E_i$ is a smooth map and $c_i= \pi_i\circ c$).

Now let $\tilde{c}: \mbr\to \varprojlim U_i$,
defined by
$\tilde{c}(t)= f\circ c(t)$,  we see that $$\tilde{c}(t)= \varprojlim(f_i\circ c_i)(t)$$
As each $f_i\circ c_i$ is smooth and hence the derivative of every order exists.
Therefore by theorem \ref{thm:galsmoothness}, $\tilde{c}$ is smooth in the sense of J. Leslie.
Recall that Smoothness of curves is defined by in the same way by J. Leslie and by Kriegl and Michor.
 This proves $f\circ c$ is smooth curve for every smooth curve $c$.

Therefore $f$ defined as in theorem \ref{thm:galsmoothness} is smooth in the sense of Kriegl and Michor too.

In view of  above discussion,
we see that the calculus on
PLB-manifolds agrees with the Kriegl and Michor calculus.

In particular we have the following proposition.
\begin{proposition}\label{prop:KM_and_Galanis}
Let $E$ be a Fr\'{e}chet space and $E= \varprojlim E_i$,
$E_is$ are Banach spaces.
Co-ordinate map $\psi_i$ as defined
above,  $\psi_i: U= \varprojlim U_i\to \psi(U)= \varprojlim\psi_i(U_i)$
is smooth in the sense of Kriegl and Michor \cite{KM}.
\end{proposition}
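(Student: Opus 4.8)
The plan is to reduce the statement to Theorem \ref{thm:galsmoothness} together with the curve-wise comparison of smoothness notions already carried out in the discussion above. First I would observe that the coordinate maps $\{\psi_i\}$ form a projective system of mappings in the sense of Definition \ref{defn:projective system of mapping}: condition (2)(b) in the definition of a PLB-manifold is precisely the commutativity $\rho_{ji}\circ\psi_j=\psi_i\circ\phi_{ji}$ for all $j\ge i$. Each $\psi_i$ is a chart of the Banach manifold $M_i$, hence a diffeomorphism of the open set $U_i$ onto the open set $\psi_i(U_i)\subset E_i$; in particular every $\psi_i$ is $C^\infty$. By condition (2)(c), $\varprojlim\psi_i(U_i)$ is open in $E$ and $\varprojlim U_i$ is open in $M$, so the projective limit $\psi:=\varprojlim\psi_i\colon U=\varprojlim U_i\to\psi(U)=\varprojlim\psi_i(U_i)$ is a well-defined map between open subsets of Fr\'{e}chet spaces.

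Next I would apply Theorem \ref{thm:galsmoothness} to the projective system of smooth maps $\{\psi_i\colon U_i\to\psi_i(U_i)\}$: it yields that $\psi=\varprojlim\psi_i$ is $C^\infty$ in the sense of J. Leslie, with $d\psi=\varprojlim d\psi_i$. It then remains to pass from Leslie-smoothness to Kriegl--Michor smoothness, and here I would invoke verbatim the argument given in the paragraphs preceding this proposition: a curve $c\colon\mbr\to U$ is smooth if and only if each component $c_i=\pi_i\circ c\colon\mbr\to U_i$ is smooth, and in that case $\psi\circ c=\varprojlim(\psi_i\circ c_i)$, where every $\psi_i\circ c_i$ is a smooth curve in the Banach space $E_i$ since $\psi_i$ is smooth. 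Applying Theorem \ref{thm:galsmoothness} once more, now to the system $\{\psi_i\circ c_i\}$, shows that $\psi\circ c$ is a smooth curve in the sense of J. Leslie, hence --- since the notion of a smooth curve coincides for J. Leslie and for Kriegl and Michor --- a smooth curve in the sense of Kriegl and Michor.

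Since $\psi$ carries every smooth curve in $U$ to a smooth curve in $\psi(U)$, and this is exactly the definition of smoothness in the convenient calculus of Kriegl and Michor, we conclude that $\psi$ is $C^\infty$ in the sense of Kriegl and Michor. The only point that needs a little care --- and the mild obstacle in the argument --- is the bookkeeping that the sets $U_i$ and their images $\psi_i(U_i)$ genuinely assemble into projective systems with open inverse limits, so that all the maps above are defined on the correct domains and Theorem \ref{thm:galsmoothness} is applicable; this is exactly what the defining conditions (2)(a)--(2)(c) of a PLB-manifold guarantee. Beyond that no further analysis is required: the proposition is essentially a repackaging of Galanis's smoothness criterion together with the curve-wise comparison already established.
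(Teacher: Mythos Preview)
Your proposal is correct and follows essentially the same approach as the paper: reduce to the curve-wise criterion for Kriegl--Michor smoothness and invoke the discussion after Theorem~\ref{thm:galsmoothness}. The paper's own proof is a one-line appeal to that remark, while you spell out the verification that $\{\psi_i\}$ is a projective system of smooth maps and the bookkeeping on domains; this extra care is harmless and arguably clarifying, but no new idea is involved.
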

\begin{proof}
We have to check that $c:\mbr\to \varprojlim U_i$ is smooth if and only if $\tilde{c}: \mbr \to \varprojlim E_i$ defined by
$$\tilde{c}(t):=(\varprojlim\psi_i)\circ c (t)= \varprojlim(\psi_i\circ c_i)(t)$$ is smooth.
 But this follows by remark after the theorem \ref{thm:galsmoothness}.
 \end{proof}

Summarizing above, PLB manifolds are
Fr\'{e}chet manifolds and fit with Kriegl and Michor calculus.

\subsubsection{Tangent bundle of a PLB manifold}\label{subsec,tgnt bndle of PLB mnfld}
We will follow \cite{galanis5} for the following discussion on the tangent bundle of a PLB manifold.  If $\{M_i,\phi_{ji}\}$
be a projective system of Banach manifolds and
$M=\varprojlim M_i$ is the $PLB$ manifold for
this projective system.
For $p\in M=\varprojlim M_i$, we have $p =(p_i)$.
We observe that $\{T_{p_j} M_j, T_{p_j}\phi_{ji}\}$ is a
projective system of topological vector spaces.
The identification $T_pM\simeq \varprojlim T_{p_i} M_i$ is given by the mapping $h:= \varprojlim T_p\phi_i$,
where $\phi_i$ are the canonical projection of $M$.
We refer to \cite{galanis5} for the proof.

Galanis in \cite{galanis5} proved that  $\{TM_i, T\phi_{ji}\}$ is a projective system of Banach manifolds and  $TM$ is a PLB manifold and
$$TM\simeq \varprojlim TM_i\text{ by } g= \varprojlim T\phi_i.$$

\subsection{Flow of a vector field:}
Suppose $Y:U \to E$ be a vector field on open subset $U$ of a Fr\'{e}chet space $E$.
For ensuring existence of the flow of a vector field on a Fr\'{e}chet space either
we can go the direction set by H. Omri as in \cite{omri} or
we can demand for some kind of tame condition as in  \cite{Hamilton}. In general Fr\'{e}chet space, flow of
a vector field may not exists for example see \cite{KM}.

Situation become easier while working on some special
vector fields on $PLB$-manifolds.  We start by following definition.
\begin{definition}[$\mu$-Lipschitz map] Let $E= \varprojlim E_i$ be a  Fr\'{e}chet space.
A mapping $\phi: E\to E $ is called projective $\mu$-Lipschitz ($\mu$, a positive real number) if
there are $\phi_i: E_i\to E_i$ such that
$\phi= \varprojlim \phi_i$ and  for every $i$,
$\phi_i$ is a $\mu$ Lipschitz map on each $E_i$.

Suppose $M$ and $N$ be PLB manifolds modeled over a PLB space $E$.
We say $\phi= M\to N$ is locally $\mu$-Lipsctiz map if there exists a coordinate charts around $p$ and
$f(p)$ such that on that chart $\phi $ is projective $\mu$-Lipscitz.
\end{definition}
In some other co-ordinate chart $\phi$ may not be
 locally $\mu$ Lipscitz.  We make a remark here
 that $\mu$ is fixed for every $i$.  \
In fact if each $\phi_i$ are smooth map on
Banach space then $\phi_i$ are locally $\mu_i$ Lipschitz for some positive
real number $\mu_i$
but above definition demands that $\phi_i$ are  $\mu$ Lipschitz.

We mention a theorem by G. Galanis which
we will be using to determine the existence of flow of the some special vector fields.
\begin{theorem}\label{thm:IFT} [Theorem 3, \cite{galanis4}]
Let $E= \varprojlim E_i$ be a Fr\'{e}chet space
\begin{equation}\label{de1}x'= \phi(t,x)\end{equation} a differential equation on $E$, where
$\phi=\mathbb R\times E\to E$ is a projective $\mu$-Lipschitz mapping.
Assume that  for an initial point $(t_0,x_0)\in \mathbb R\times E$ there exists a constant $\tau\in\mathbb R$ such that
$$M_1:= \sup\{p_i(\phi(t,x_0)): i\in \mathbb N, t\in[t_0-\tau, t_0+\tau]\}<+\infty$$
then this differential equation has a unique solution in the interval
$I=[t_0-a,t_0+a]$, where $a=\inf\{\tau, \frac{1}{M_1+\mu}\}$
\end{theorem}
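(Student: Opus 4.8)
The statement is essentially Theorem~3 of \cite{galanis4}, so one may simply invoke it; the natural self-contained argument, however, is to solve the equation levelwise on each Banach factor $E_i$, extract an interval of existence that is \emph{the same for all $i$}, and then glue the pieces together by uniqueness. The plan is as follows.

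Write $x_0 = (x_0^i)$ with $x_0^i = e_i(x_0) \in E_i$, and let $\phi_i : \mathbb R \times E_i \to E_i$ be the Banach-space maps with $\phi = \varprojlim \phi_i$, each $\mu$-Lipschitz in the spatial variable. Fix $i$ and consider the Banach Cauchy problem $x_i' = \phi_i(t, x_i)$, $x_i(t_0) = x_0^i$. I would realise its solution on $I = [t_0 - a, t_0 + a]$, $a = \inf\{\tau, \tfrac{1}{M_1+\mu}\}$, as the fixed point of the Picard operator $(T y)(t) = x_0^i + \int_{t_0}^t \phi_i(s, y(s))\, ds$ on the closed ball of radius $1$ about the constant $x_0^i$ in $C(I, E_i)$. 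The only estimate that uses the hypotheses is $\|\phi_i(s, y(s))\|_{E_i} \le \|\phi_i(s, x_0^i)\|_{E_i} + \mu\,\|y(s) - x_0^i\|_{E_i} = p_i(\phi(s, x_0)) + \mu\,\|y(s) - x_0^i\|_{E_i} \le M_1 + \mu$ on that ball, together with $\|T y_1 - T y_2\|_\infty \le \mu a\,\|y_1 - y_2\|_\infty$ where $\mu a \le \mu/(M_1+\mu) < 1$; these show $T$ maps the ball into itself and is a contraction, so there is a unique $x_i$. The point to stress is that $a$, the radius $1$, and all constants in these estimates depend only on $M_1$ and $\mu$, \emph{not} on $i$ --- and securing this uniformity is the main obstacle, since it is exactly what forces the definition of projective $\mu$-Lipschitz to fix one $\mu$ for all $i$ and the hypothesis to supply a single finite $M_1$; without it the intervals of existence could collapse as $i \to \infty$ and no limiting curve would survive.

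With the $x_i$ in hand I would check they form a projective system: because $\{\phi_i\}$ is a projective system of mappings, $\rho_{ji} \circ \phi_j(t, \cdot) = \phi_i(t, \rho_{ji}(\cdot))$, so $\rho_{ji} \circ x_j$ solves $y' = \phi_i(t, y)$ on $I$ with initial value $\rho_{ji}(x_0^j) = x_0^i$, whence by the levelwise uniqueness $\rho_{ji}(x_j(t)) = x_i(t)$ for all $t \in I$. Thus $x(t) := (x_i(t))$ is a well-defined curve $I \to E = \varprojlim E_i$; each $x_i$ is $C^1$ (its regularity inherited from that of $\phi_i$), so by Theorem~\ref{thm:galsmoothness} and the remarks following it $x = \varprojlim x_i$ is differentiable with $x'(t) = \varprojlim x_i'(t) = \varprojlim \phi_i(t, x_i(t)) = \phi(t, x(t))$ and $x(t_0) = x_0$. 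Finally, if $\tilde x = (\tilde x_i)$ is another solution of \eqref{de1} on $I$ with $\tilde x(t_0) = x_0$, then projecting to each $E_i$ shows $\tilde x_i$ solves the $i$-th Banach Cauchy problem, so $\tilde x_i = x_i$ and $\tilde x = x$. Apart from the uniform-in-$i$ bookkeeping of the first step, everything else is a routine consequence of uniqueness and of Theorem~\ref{thm:galsmoothness}.
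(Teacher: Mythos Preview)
Your proposal is correct and follows exactly the approach sketched in the paper: reduce to the Banach problems $x_i'=\phi_i(t,x_i)$, solve each on the common interval $I=[t_0-a,t_0+a]$ via Picard with the uniform constants $M_1$ and $\mu$, use levelwise uniqueness to get $\rho_{ji}\circ x_j=x_i$, and then assemble $x=\varprojlim x_i$ with smoothness and uniqueness coming from Theorem~\ref{thm:galsmoothness}. The paper merely cites \cite{galanis4} for the Banach step, whereas you spell out the Picard estimate; the only cosmetic wrinkle is that your strict contraction $\mu a<1$ needs $M_1>0$ (when $M_1=0$ and $\tau\ge 1/\mu$ one has $\mu a=1$), but this is handled by the standard trick of working first on a slightly smaller interval and then iterating.
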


In above theorem, we used the notation of article \cite{galanis4}.

Galanis and Palamides proved above theorem in \cite{galanis4}. Following we sketch the proof of above
theorem.
\begin{proof}
\begin{enumerate}
\item As $\phi:\mbr\times E\to E$ is a projective $\mu$-Lipschitz map, there is a family $\{\phi_i:\mbr\times E_i\to E_i\}_{i\in\mathbb N}$ of
$\mu$- Lipschitz map realizing $\phi$.  Corresponding to system as
given in equation \ref{de1}, we have system of ordinary
differential equations on the Banach space $E_i$ defined:
\begin{equation}\label{de}x_i'=\phi_i(t, x_i)\end{equation}
\item With the condition given in the theorem, Galanis (\cite{galanis4})  proved that there is a unique solution $x_i$ for
each  equation \ref{de} defined on the interval $I= [t_0-a, t_0+a]$.
\item These solution are related that is we have $\rho_{ji}\circ x_j= x_i$.
\item As a result we can define $x= \varprojlim x_i$.
By criterion of smooth map as in section \ref{sec:smooth PLB space}, if
each $x_i$ is smooth then $x$ is smooth map defined on $I$.
\item Using the uniqueness of each $x_i$
 with respect to the initial data,
Prof. Galanis \cite{galanis4} proved that $x$ as defined above is unique with respect to the initial data $x(t_0)= x_0$.
\end{enumerate}
\end{proof}
\begin{theorem}\label{Flow}
Let $X: E \to E$ be a smooth vector field on $E$.
Suppose $X$ is a $\mu$-Lipschitz projective  map such that
$X= \varprojlim X_i$.
Each $X_i$ is defined on $E_i$ and for each $i$ and
$x_i\in E_i$, $\|X_i(x_i)\|_i<M$ for some fixed $M>0$.

Then there exists an $\epsilon>0$ such that for every
$y\in E$ there is a unique integral curve
$t\to x^y(t) \in E $  such that $x^y(0)= y$. Also the map
$F: [-\epsilon, \epsilon] \times E\to E$ defined by
$$F(t, p):= x_p(t)$$ is a smooth map.
\end{theorem}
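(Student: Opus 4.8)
The plan is to reduce the existence and smoothness of the flow $F$ on the Fréchet space $E$ to the componentwise data $\{X_i\}$ and then invoke Theorem \ref{thm:IFT} and the smoothness criterion of Theorem \ref{thm:galsmoothness}. First I would observe that the autonomous vector field $X$ can be viewed as a $t$-independent right-hand side $\phi(t,x):=X(x)$, which is trivially projective $\mu$-Lipschitz with realizing family $\phi_i(t,x_i):=X_i(x_i)$. Fixing an arbitrary initial point $y\in E$ and $t_0=0$, the hypothesis $\|X_i(x_i)\|_i<M$ \emph{uniformly in $i$ and in $x_i$} gives at once
\[
M_1:=\sup\{p_i(\phi(t,y)):i\in\mathbb N,\ t\in[-\tau,\tau]\}=\sup_i\|X_i(y_i)\|_i\le M<+\infty
\]
for every $\tau$, so the finiteness condition of Theorem \ref{thm:IFT} holds with the \emph{same} bound $M$ regardless of the choice of $y$. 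Consequently Theorem \ref{thm:IFT} produces, for each $i$, a unique solution $x_i^{y}(\cdot)$ of $x_i'=X_i(x_i)$, $x_i^y(0)=y_i$, defined on the common interval $I=[-\epsilon,\epsilon]$ with $\epsilon:=\inf\{1,\tfrac{1}{M+\mu}\}$ — crucially $\epsilon$ depends only on $M$ and $\mu$, hence is uniform in $y$. The compatibility $\rho_{ji}\circ x_j^y=x_i^y$ and the definition $x^y(t):=\varprojlim x_i^y(t)$ then yield the desired integral curve through $y$, and its uniqueness follows from the uniqueness of each $x_i^y$ with respect to its initial datum, exactly as in the sketched proof of Theorem \ref{thm:IFT}.

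Next I would address the joint smoothness of $F(t,p)=x_p(t)$. The natural route is to pass to the componentwise flows $F_i:[-\epsilon,\epsilon]\times E_i\to E_i$, $F_i(t,p_i):=x_{p_i}(t)$. Since each $X_i$ is smooth and (being $\mu$-Lipschitz with globally bounded norm) has solutions on the fixed interval $I$ for \emph{all} initial data in $E_i$, the classical Banach-space theory of ODEs (smooth dependence of flows on time and on initial conditions) gives that each $F_i$ is smooth. One then checks that $\{F_i\}$ is a projective system of maps: applying $\rho_{ji}$ to $x_{p_j}(t)$ gives a solution of $x_i'=X_i(x_i)$ with initial value $\rho_{ji}(p_j)$, so by uniqueness $\rho_{ji}(F_j(t,p_j))=F_i(t,\rho_{ji}(p_j))$, i.e. the square
\[
\begin{CD}
I\times E_j @>F_j>> E_j\\
@VV\mathrm{id}\times\rho_{ji}V @VV\rho_{ji}V\\
I\times E_i @>F_i>> E_i
\end{CD}
\]
commutes. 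Hence $F=\varprojlim F_i$, and Theorem \ref{thm:galsmoothness} (together with the remark identifying Leslie-smoothness with Kriegl--Michor smoothness on PLB-spaces) shows $F$ is smooth. Note $I\times E=\varprojlim(I\times E_i)$ is itself a PLB-space, so Theorem \ref{thm:galsmoothness} applies verbatim to the system $\{F_i\}$.

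I expect the main obstacle to be the smooth dependence on initial conditions at the \emph{Banach} level carried out so that the estimates are uniform in $i$, and more delicately the verification that the derivatives $dF_i$ themselves form a projective system — this is what Theorem \ref{thm:galsmoothness}(3) needs in order to conclude $dF=\varprojlim dF_i$. Concretely, the derivative of $F_i$ with respect to the initial condition solves the linearized (variational) equation $\xi_i'=DX_i(x_{p_i}(t))\xi_i$; one must check that these linearized flows are again compatible under the $\rho_{ji}$, which follows from $\rho_{ji}\circ DX_j = DX_i\circ(\rho_{ji}\times\rho_{ji})$ (a consequence of $X=\varprojlim X_i$ differentiated via Theorem \ref{thm:galsmoothness}) plus uniqueness for the linear equation. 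Once this bookkeeping is in place the result drops out; everything else is a routine transcription of Picard iteration and Gronwall-type estimates, which I would not spell out.
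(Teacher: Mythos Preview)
Your proposal is correct and follows essentially the same route as the paper: reduce to the componentwise Banach ODEs via Theorem~\ref{thm:IFT}, use the uniform bound $M$ to get a $y$-independent interval $[-\epsilon,\epsilon]$ with $\epsilon=\frac{1}{M+\mu}$, assemble the componentwise flows $F_i$ into $F=\varprojlim F_i$, and conclude smoothness from Theorem~\ref{thm:galsmoothness}. Your closing concern about verifying that the $dF_i$ form a projective system is unnecessary: in Theorem~\ref{thm:galsmoothness} items (2)--(3) are \emph{conclusions}, not hypotheses, so once you know the $F_i$ are individually smooth and form a projective system, smoothness of $F$ (and the compatibility of the $dF_i$) comes for free.
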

Technique of the proof given below is taken from \cite{galanis4} and \cite{Cartan}.
\begin{proof}
Let $y\in E$ and consider the differential equation
$$x'(t)= X(x(t))$$ with  $x(0)=y$.  For any $t\geq 0 $, we have
$$p_i(X(x(t)))= \|X_i(x_i(t))\|_i\leq M.$$
Let $\epsilon= \frac{1}{M+\mu}$, then theorem \ref{thm:IFT}, implies that exists a unique curve
$$x^y: [-\epsilon, \epsilon]\to E$$
such that $x^y(0)= y$.
In fact $x^y= \varprojlim x_i^{y_i}$ ($\varprojlim x_i^{y_i}$
is well defined, we refer page 5 \cite{galanis4})
where $x_i^{y_i}: [-\epsilon, \epsilon]\to E_i$ be
the integral curves passing through $y_i$ of
corresponding vector fields
$X_i$.

For each $i$ define
$F_i: [-\epsilon, \epsilon]\times E_i\to E_i$ such that
$F_i(t,p_i)= x_i^{p_i}(t)$ and
$F: [-\epsilon, \epsilon]\times E\to E$ such that
$F(t,p)= x^{p}(t)$.

Each $F_i$ are smooth function and $\{F_i\}$ makes
projective system of map with projective limit $F$.  That is we have
$F= \varprojlim F_i$ and this proves $F$ is a smooth map.
\end{proof}

\section{Compatible symplectic structure}

Suppose $(M,\sigma)$ be a PLB manifold with a weak symplectic form $\sigma$.   In this section, we will define  compatibility of $\sigma$ with the projective system.

\subsection{Basics about weak symplectic structure}\label{sec:basics about weak symplectic str}
Let $M$ be a PLB manifold and $\{M_i,\phi_{ji}\}_{i,j\in\mathbb N}$ be a projective system of Banach manifolds with $M= \varprojlim M_i$. Suppose each $M_i$ is modeled over a reflexive Banach space $E_i$ and each $M_i$ has a weak symplectic structure $\sigma^i$.

Let $x\in M$, we have $x= (x_i)$ where $ \phi_{ji}(x_j)= x_i$.  For each $x_i$, following \cite{bambusi}, we define a norm on $T_{x_i} M_i$, for $X\in T_{x_i}M_i$,
$$\|X\|_{\mathcal{F}_{x_i}}:= \sup_{\|Y\|_i=1}|\sigma^i_{x_i}(X,Y)|$$

where $\|.\|_i$ is the norm on the Banach space $T_{x_i}M_i$. Let $\mathcal{F}_{x_i}$ be the completion
of $T_{x_i}M_i$ with respect to the $\|.\|_{\mF_{x_i}}$ norm. As each $T_{x_i}M_i$ is a reflexive Banach space, we have that the induced map (Lemma 2.8,\cite{bambusi}),
$$(\sigma_{x_i}^i)^b: T_{x_i}M_i\to \mathcal{F}_{x_i}^*;\;\; X\to \sigma^i_{x_i}(X,.)$$
is a topological isomorphism.

Define for each $i, j\in\mathbb N\;\; j\geq i$ and given $x=(x_i)\in \varprojlim M_i= M$,
$$\psi_{ji}:\mathcal{F}_{x_j}^*\to \mathcal{F}_{x_i}^*\text{ by }\;$$
\begin{equation}\label{psiji}\psi_{ji}= (\sigma^i_{x_i})^b\circ T_{x_j}\phi_{ji}\circ((\sigma^j_{x_j})^b)^{-1}
\end{equation}
where $T_{x_j}\phi_{ji}$ is the differential of $\phi_{ji}: M_j\to M_i$ at $x_j$.  We see that $\{\mathcal{F}_{x_i}^*,\psi_{ji}\}_{i, j\in\mathbb N}$ is a projective system of Banach spaces and smooth maps (since for any $k\geq j\geq i$, we have $\psi_{ki}=\psi_{kj}\circ\psi_{ji}$). We see that $\{(\sigma^i_{x_i})^b: T_{x_i}M_i\to \mathcal{F}_{x_i}^*\}$ is a projective system of mappings because $$(\sigma^j_{x_j})^b\circ \psi_{ji}= T_{x_j}\phi_{ji}\circ (\sigma^i_{x_i})^b.$$

Fix some point $p= (p_i)\in M$.   We know that for a fixed $p= (p_i)\in M$,  $\{T_{p_j} M_j, T_{p_j}\phi_{ji}\}_{i,j\in \mathbb N}$ is a projective system of Banach spaces. In section 1.3.3 we saw that $T_pM\simeq\varprojlim T_{p_j}M_j$. Let $h_p$ be the isomorphism from $T_pM \to \varprojlim T_{p_j}M_j$ as defined in \cite{galanis5}.

For some coordinate neighborhood  $U=\varprojlim U_i$ around $p$. Let $\sigma_p:= \sigma|_{x=p}$ be the constant symplectic structure on $U$ with a natural parallelism $TU\simeq U\times E$ where $E\simeq T_pM$ is a Fr\'{e}chet space.
On each $U_i$ we have a corresponding constant symplectic structure $\sigma_{p_i}^i$ with the natural parallelism.

For $t\in [-1,1]$ we define $\sigma^t:= \sigma_p+t(\sigma-\sigma_p)$ and similarly  $(\sigma^i)^t:=\sigma^i_{p_i}+t(\sigma^i-\sigma_{p_i}^i)$
Suppose for some $q= (q_i)\in M$ and for some $t\in [-1,1]$, $((\sigma_{q})^{tb})^{-1}$ and $(((\sigma^i_{q_i}))^{tb})^{-1}$ exist for each $i$.  Then $\{(((\sigma^i_{q_i}))^{tb})^{-1}:  \mathcal{F}_{q_i}^{*t}\to T_{q_i}M_i\}  $ is a projective system of map. Here $\mathcal{F}_{q_i}^{*t}$ defined in the same way as $\mathcal{F}_{q_i}^*$ above.   Where $\mathcal{F}_{q_i}^*$ are spaces corresponding to $\sigma^i$,  $\mathcal{F}_{q_i}^{*t}$ are spaces corresponding to weak symplectic structure $(\sigma^i)^t$.  Also for fixed $t$ corresponding to $\psi_{ji}$ map,  we have  map $\psi_{ji}^t$ (for the weak symplectic structure  $(\sigma^i)^t$). Therefore we see that for each $t$, the collections $\{(((\sigma^i_{q_i}))^{tb})^{-1}:  \mathcal{F}_{q_i}^{*t}\to T_{q_i}M_i\} $ is a projective system of mapping.

For a weak symplectic structure $\sigma$ on a PLB manifold manifold $M$ as discussed above, we have for each $p= (p_i)$, $\sigma_p^b$ is a map on $T_pM$ that is  $\sigma_p^b: T_pM\to T_p^*M$.  Let $h_p: T_pM\to \varprojlim T_{p_j}M_j$ be an isomorphism \cite{galanis5}.  With this identification we can consider $\sigma_p^b$ as a map defined on $\varprojlim T_{p_j}M_j$.

Now we are in a position  to define a compatible symplectic structure.
\subsection*{Compatible symplectic structure}
We say that a weak symplectic structure $\sigma$ on $M$ is compatible with the projective system if the following is satisfied:
\begin{enumerate}
\item Suppose there are weak symplectic structure $\sigma^i$ on $M_i$ such that for every $x\in M$,  $ \sigma^b_x:=\varprojlim (\sigma_{x_i}^i)^b$.
\item If there exists a 1 - form $\alpha$ such that for each $x\in M$ and $\alpha_x= (\alpha_{x_i}^i)\in \varprojlim \mathcal{F}_{x_i}^*$,  we must have $h_x((\sigma_{x})^{tb})^{-1} (\alpha_{x})=\left(((\sigma_{x_i}^{i})^{tb})^{-1} (\alpha^i_{x_i})\right)$ whenever defined.
\item For such $\alpha$ as in above, whenever  $Y^i_t(x_i):=[\left(x_i,((\sigma_{x_i}^{i})^{tb})^{-1} (\alpha^i_{x_i})\right)]$ is defined  on  some open set $U_i$ of $M_i$, it is defined on whole $M_i$.  Each $Y^i_t$ is $\mu$-Lipschitz  for some fixed positive real number $\mu$.
\end{enumerate}

\begin{remark}
In condition 1, $\varprojlim (\sigma^i_{x_i})^b$ make sense because $\{(\sigma^i_{x_i})^b\}$  makes projective system of mapping.   $\sigma_x^b$ as a map $ \sigma_x^b:\varprojlim T_{x_i}M_i\to \mathcal{F}_{x_i}^*$ is the projective limit of the maps $(\sigma^i_{x_i})^b$.

\end{remark}

\begin{remark}
For every $j\geq i$, condition 2 demands $$h_x((\sigma_{x})^{tb})^{-1} (\alpha_{x})=\left(((\sigma_{x_i}^{i})^{tb})^{-1} (\alpha^i_{x_i})\right)\in \varprojlim T_{x_i}M_i.$$  This means we have , $$T_{x_j}\phi_{ji}(((\sigma_{x_j}^{i})^{tb})^{-1} (\alpha^j_{x_j}))=((\sigma_{x_i}^{i})^{tb})^{-1} (\alpha^ i_{x_i}).$$
\end{remark}

The definition of compatibility of $\sigma$ arise while exploring the possibility of existence of Darboux chart for the case of the loop space $(LM, \Omega^\omega)$ discussed \cite{pradip2}.

\section{A Fr\'{e}chet space used in the theorem}
Since the Darboux theorem is a local result, we will work on some open subset $\mathcal{U}$ (containing zero) of a Fr\'{e}chet space $E$ which is the projective limit of Banach spaces. We have $\{E_i,\phi_{ji}\}$ is the  inverse system of Banach spaces (manifolds) $E_i$ and
$E= \varprojlim E_i$.    As $E$ is a PLB manifold, each $\phi_{ji}$ is the inclusion map.  We can assume that
$\mathcal{U}= \varprojlim\mathcal{U}_i$, where $\mathcal{U}_i$ are open subsets of Banach spaces $E_i$.  Let $\sigma$ be a compatible symplectic structure on the PLB manifold $\mathcal{U}$.

As we discussed earlier, for a fixed $x= (x_i)\in E$,  $\{T_{x_j} E_j= E_j, T_{x_j}\phi_{ji}=\phi_{ji}\}_{i,j\in \mathbb N}$ is a projective system of Banach spaces and $E=T_x E=\varprojlim T_{x_j}E_j$.

If the topology on $E$ is generated by the collection of semi-norms $\{p_k: k\in \mathbb N\}$ then for each $x\in E$ and
$k\in \mathbb N$, define norms on
$E$ as following.  For $X\in E$,
$$p_k^x(X):= \sup_{p_k(Y)=1} |\sigma_x(X,Y)|.$$
All $p_k^x$ are norms on $E$ and collection $\{p_k^x: k\in\mathbb{N}\}$ generate a topology on $E$.
Let completion of $E$ with respect to this collection be denoted by
$\mathcal{F}_x$. Then $\mathcal{F}_x$ is a  Fr\'{e}chet space.

For a fixed $x\in \mathcal{U}$, let  $H_x:=\{\sigma_x(X,.): X\in E\}$. We can extend $\sigma$ as a continuous bilinear map $E\times \mathcal{F}_x\to \mathbb R.$


For $x= (x_i)$, in the section \ref{sec:basics about weak symplectic str} we defined $\mathcal{F}_{x_i}^*$.
If $\sigma$ on $E$ is compatible with the inductive maps,
for $X\in T_x E= E$
$$\sigma_x(X,.)= (\sigma_{x_i}^i(X_i,.))\in \varprojlim \mF_{x_i}^*$$
Therefore $H_x \subset\varprojlim \mF_{x_i}^*$ as a set.

We know that each $(\sigma^i_{x_i})^b: T_{x_i}E_i(= E_i)\to \mathcal{F}_{x_i}^*$ is a topological isomorphism. Therefore a typical element of $\mathcal{F}_{x_i}^*$ will be given by $\sigma^i_{x_i}(X_i,.)$ for some $X_i\in E_i$.  Hence typical element of $\varprojlim \mF_{x_i}^*$ will be given by $(\sigma_{x_i}^i(X_i,.))$ where we must have $\psi_{ji}(\sigma_{x_j}^j(X_j,.))= \sigma_{x_i}^i(X_i,.)$.  This will happen if and only if $T_{x_j}\phi_{ji}(X_j)=\phi_{ji}(X_j)= X_i$. (since we can identify $(\sigma_{x_i}^i(X_i,.))=\sigma_x(X,.)$).  This shows that as set theoretic, $\varprojlim \mF_{x_i}^*\subset H_x$.

Therefore we have, as set, $\varprojlim \mF_{x_i}^*= H_x$.

On $H_x$, we have two possible topology:
\begin{enumerate}
\item Projective limit topology when we identify $H_x$ as $\varprojlim \mathcal{F}_{x_i}^*$. Projective limit topology on $H_x$ given as: For $i\in\mathbb N$, we have $\|\sigma_x(X,.)\|_i:= \|\sigma^i_{x_i}(X_i,.)\|_{op}$. Here at the right hand side of expression $\|.\|_{op}$ is operator norm  of $\sigma^i_{x_i}(X_i,.)$ as element of $\mathcal{F}_{x_i}^*$.
\item And induced topology when we consider $H_x$ as subset of $E^*$.
\end{enumerate}
We fix the notation $\|.\|_i$ for norm on Banach space $T_{x_i}E_i=E_i$ and we recall that $\|.\|_{\mathcal{F}_{x_i}}$ is norm on $\mathcal{F}_{x_i}$.

We have for $X,Y\in T_{x_i}E_i= E_i$,
\begin{equation}\label{sigmaXY}
\|X\|_i\|Y\|_{\mathcal{F}_{x_i}}\geq |\sigma_{x_i}^i(X,Y)|.
\end{equation}
Fix $\epsilon >0$, for $Y\in \mathcal{F}_{x_i}$ we have a sequence $(Y_n)\in E_i$ such that $\lim_{n\to \infty}Y_n= Y$ in $\mathcal{F}_{x_i}$. This mean there exists $N^Y\in \mathbb N$ such that
\begin{equation}\label{YNY}\|Y_n\|_{\mathcal{F}_{x_i}}-\epsilon <\|Y\|_{\mathcal{F}_{x_i}}<\|Y_n\|_{\mathcal{F}_{x_i}}+\epsilon;\;\; \forall n\geq N^Y.
\end{equation}
Let $A=\{Y\in \mathcal{F}_{x_i}: \|Y\|_{\mathcal{F}_{x_i}}=1\}$. For each $Y\in A$, there exists a sequence $(Y_n)$ in $E_i$ such that equation \ref{YNY} satisfies for some $N^Y$. We have a collection call $A^Y_\epsilon$ and $B^Y_\epsilon$ as follows:  For each $Y\in A$, fix a sequence $(Y_n)\subset E_i$ such that $\lim Y_n= Y$.
$$A^Y_\epsilon:= \{Y_n: n\geq N^Y\}\;\text{ and } B^Y_\epsilon:= \{y\in E_i : \|y\|_{\mathcal{F}_{x_i}}-\epsilon <\|Y\|_{\mathcal{F}_{x_i}}\}.$$
Let $A^\epsilon:= \bigcup_{Y\in A}A^Y_\epsilon$ and  $B^\epsilon= \bigcup_{Y\in A} B^Y_\epsilon.$

If $X\in E_i$ is fixed and $f$ is defined on $E_i$ such that $f(Y)= |\sigma^i_{x_i}(X,Y)|$ and we have continuous extension of $f$ to $\mathcal{F}_{x_i}$. Then we have
$$\sup_{Y\in A}f(Y)\leq \sup_{Y\in A^\epsilon}f(Y)$$
Also $A^\epsilon\subset B^\epsilon$, we have
$$\sup_{Y\in A} f(Y)\leq \sup_{Y\in B^\epsilon} f(Y).$$
On $B^\epsilon$, that is for $X\in E_i$ and $Y\in B^\epsilon$, we have
$|\sigma^i_{x_i}(X,Y)|\leq(1+\epsilon)|\|X\|_i.$
Therefore we have
\begin{eqnarray*}
\|\sigma^i_{x_i}(X,.)\|_{op}&= &\sup_{Y\in A}|\sigma^i_{x_i}(X,Y)|\\
&\leq& \sup_{Y\in B^\epsilon}|\sigma^i_{x_i}(X,Y)|\\
&\leq&(1+\epsilon)\|X\|_i.
\end{eqnarray*}
This is true for every $\epsilon$, we have for $X\in T_{x_i}E_i= E_i$,
\begin{equation}\label{alpha}
\|\sigma^i_{x_i}(X,.)\|_{op}\leq \|X\|_i.
\end{equation}

In the next section,  we will need the following proposition.
\begin{proposition}\label{prop:sigmaisomorpshim} $\sigma_x^b:E\to H_x$ is an isomorphism.
\end{proposition}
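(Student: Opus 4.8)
The plan is to exhibit $\sigma_x^b$ as the projective limit $\varprojlim(\sigma_{x_i}^i)^b$ of the Banach--space maps $(\sigma_{x_i}^i)^b$ and then to push the Banach--space conclusions through the inverse--limit functor; in this way the proposition reduces entirely to facts already assembled in Sections~\ref{sec:smooth PLB space} and \ref{sec:basics about weak symplectic str} together with Lemma~2.8 of \cite{bambusi}. Throughout I give $H_x$ the projective--limit (Fr\'echet) topology coming from the identification $H_x=\varprojlim\mathcal{F}_{x_i}^*$ established above, whose defining sequence of seminorms is $\alpha=(\alpha_i)\mapsto\|\alpha_i\|_{op}$; this is the topology for which the statement is used in the next section.

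First I would recall that, because each $E_i$ is reflexive, Lemma~2.8 of \cite{bambusi} makes $(\sigma_{x_i}^i)^b\colon T_{x_i}E_i\to\mathcal{F}_{x_i}^*$ a topological isomorphism of Banach spaces: its operator norm is at most $1$ by \eqref{alpha}, and its inverse is bounded, say $\|((\sigma_{x_i}^i)^b)^{-1}(\beta)\|_i\le C_i\|\beta\|_{op}$ for all $\beta\in\mathcal{F}_{x_i}^*$. By the first condition in the definition of compatibility, $\sigma_x^b=\varprojlim(\sigma_{x_i}^i)^b$, and from the defining formula \eqref{psiji} one reads off $\psi_{ji}\circ(\sigma_{x_j}^j)^b=(\sigma_{x_i}^i)^b\circ T_{x_j}\phi_{ji}$, i.e. $\{(\sigma_{x_i}^i)^b\}$ is a projective system of maps from $\{T_{x_i}E_i,\phi_{ji}\}$ to $\{\mathcal{F}_{x_i}^*,\psi_{ji}\}$. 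Composing this identity with $((\sigma_{x_i}^i)^b)^{-1}$ on the left and $((\sigma_{x_j}^j)^b)^{-1}$ on the right yields $((\sigma_{x_i}^i)^b)^{-1}\circ\psi_{ji}=\phi_{ji}\circ((\sigma_{x_j}^j)^b)^{-1}$, so $\{((\sigma_{x_i}^i)^b)^{-1}\}$ is again a projective system; hence $\varprojlim((\sigma_{x_i}^i)^b)^{-1}\colon H_x=\varprojlim\mathcal{F}_{x_i}^*\to\varprojlim E_i=E$ is well defined and is a two--sided inverse of $\sigma_x^b$. In particular $\sigma_x^b$ is a linear bijection onto $H_x$.

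It then remains only to check continuity of $\sigma_x^b$ and of its inverse for the topologies above, and this is immediate from the two Banach--space estimates: for $X=(X_i)\in E$ the $i$-th seminorm of $\sigma_x^b(X)$ is $\|(\sigma_{x_i}^i)^b(X_i)\|_{op}\le\|X_i\|_i$, the $i$-th seminorm of $X$, so $\sigma_x^b$ is continuous; and for $\alpha=(\alpha_i)\in H_x$ the $i$-th seminorm of $(\sigma_x^b)^{-1}(\alpha)$ is $\|((\sigma_{x_i}^i)^b)^{-1}(\alpha_i)\|_i\le C_i\|\alpha_i\|_{op}$, so $(\sigma_x^b)^{-1}$ is continuous. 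Therefore $\sigma_x^b$ is a topological isomorphism. The one genuinely non--formal ingredient is Lemma~2.8 of \cite{bambusi}, equivalently the reflexivity of the $E_i$: it is what guarantees that each $(\sigma_{x_i}^i)^b$ is onto $\mathcal{F}_{x_i}^*$ with bounded inverse and hence supplies the constants $C_i$. Everything else is the routine functoriality of $\varprojlim$; the only step demanding care is the verification that $\{((\sigma_{x_i}^i)^b)^{-1}\}$ is a projective system, so that the candidate inverse actually exists at the level of the limit.
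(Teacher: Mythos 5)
Your proof is correct and follows essentially the same route as the paper: both reduce to Lemma~2.8 of \cite{bambusi} at each Banach level and define the inverse of $\sigma_x^b$ as the projective limit of the levelwise inverses $((\sigma_{x_i}^i)^b)^{-1}$. You are somewhat more thorough than the paper's own proof, which simply invokes compatibility to place the tuple of inverses in $E$ and does not record the intertwining identity for the inverses or the seminorm estimates establishing continuity in both directions.
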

\begin{proof}
As discussed earlier,  for $x= (x_i)$, $H_x= \varprojlim \mathcal{F}_{x_i}^*$. For each $x_i$, we know $(\sigma^i_{x_i})^b:E_i (=T_{x_i}E_i)\to F^*_{x_i}$ is an isomorphism (lemma 2.8,\cite{bambusi}).  Let the inverse of $(\sigma^i_{x_i})^b$ is denoted by $J^i_{x_i}$.  Define
\begin{eqnarray*}&&\sigma^b_x: E=\varprojlim T_{x_i}E_i\to \varprojlim \mathcal{F}_{x_i}^*;\;\;\; \sigma_x^b(X)= \left((\sigma^i_{x_i})^b(X_i)\right)\text{  and }\\
&&J_x: H_x(=\varprojlim\mathcal{F}_{x_i}^*)\to \varprojlim  T_{x_i}E_i = E\text{ is defined as following.}
\end{eqnarray*}
If $\alpha_x\in H_x$, then
$J_x$ can be defined using $J_{x_i}^i= ((\sigma_{x_i}^i)^b)^{-1}$.
$$J_x(\alpha_x):= (J_{x_i}^i(\alpha_{x_i}))$$
As $\sigma$ is compatible with the inductive limit $(J^i_{x_i}(\alpha_{x_i}))\in \varprojlim T_{x_i} E_i= E$.

$J_x$ is the inverse of $\sigma_x^b$ and this gives an isomorphism.
\end{proof}
\begin{remark}
Form above proposition, it is clear that $H_x$ and $H_y$ are topologically isomorphic.
\end{remark}
\section{Condition for existence of Darboux chart on weak symplectic PLB manifolds}

We state the theorem for some open neighborhood of $0\in E$.

\begin{theorem}\label{mainthm}
Suppose
\begin{enumerate}
\item There exists a neighborhood $\mathcal{W}$ of $0\in E$, such that all $H_x$ are identical and $\sigma^{tb}_x: E\to H$ is isomorphism  for each $t$ and for each $x\in \mathcal{W}$.
\item There exists a vector field $X= (X_i)$ on $E$ support lies in $\mathcal{W}$ such that on $\mathcal{W}$,  $L_X\sigma= \sigma-\sigma_0$  and $X(0)=0$.
\item For every $i$ and $t\in [-1,1]$,  $X_i(x_i)$ as element on $E_i$ is bounded by $\frac{M}{\|((\sigma_{x_i}^i)^{tb})^{-1}\|_{op}}$ for some positive real $M$.
\item $\sigma: \mathcal{W} \to L(H, E)$, $x\to (\sigma^b_x)^{-1} $ is smooth.
\end{enumerate}
then there exists a coordinate chart $(\mathcal{V},\Phi)$ around zero
such that $\Phi^*\sigma= \sigma_0$.
\end{theorem}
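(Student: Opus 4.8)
The strategy is the classical Moser–Weinstein homotopy ("path method") trick, carried out in the Fréchet/PLB setting by passing to the projective system and using Theorem~\ref{Flow} to produce the required flow. Concretely, I would work with the family $\sigma^t = \sigma_0 + t(\sigma-\sigma_0)$, $t\in[-1,1]$, on the neighborhood $\mathcal{W}$ of $0$. By hypothesis~(1), each $\sigma^{tb}_x : E\to H$ is an isomorphism, so for the vector field $X=(X_i)$ of hypothesis~(2) I can define a time-dependent vector field
\[
Y_t(x) := -\bigl(\sigma^{tb}_x\bigr)^{-1}\bigl(\sigma_x(X(x),\cdot)\bigr) = -\bigl(\sigma^{tb}_x\bigr)^{-1}\bigl(\iota_X\sigma\bigr)_x,
\]
and correspondingly $Y^i_t(x_i) = -\bigl((\sigma^i_{x_i})^{tb}\bigr)^{-1}\bigl(\sigma^i_{x_i}(X_i(x_i),\cdot)\bigr)$ on each $E_i$. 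Compatibility of $\sigma$ with the projective system gives $Y_t = \varprojlim Y^i_t$, so $Y_t$ is a genuine projective vector field. The point of hypothesis~(3) is exactly that $\|Y^i_t(x_i)\|_i \le \|((\sigma^i_{x_i})^{tb})^{-1}\|_{op}\,\|\sigma^i_{x_i}(X_i(x_i),\cdot)\|_{op} \le \|((\sigma^i_{x_i})^{tb})^{-1}\|_{op}\cdot\|X_i(x_i)\|_i \le M$ (using inequality~\eqref{alpha}), so the $Y^i_t$ are uniformly bounded; together with the $\mu$-Lipschitz condition coming from hypothesis~(3) on the compatibility data, Theorem~\ref{Flow} (applied to the time-dependent field, or after the usual suspension trick replacing $Y_t$ by $(1,Y_t)$ on $\mbr\times E$) yields a flow $\Phi_t$ defined for $t$ in a uniform interval around $0$, which is smooth in the sense of Kriegl–Michor by the criterion of Section~\ref{sec:smooth PLB space}.

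**Main computation.** With the flow in hand, the heart of the argument is the standard Lie-derivative identity. Since $X(0)=0$ we have $Y_t(0)=0$, so the flow fixes $0$ and stays in $\mathcal{W}$ for $t$ near $0$ on a possibly smaller neighborhood $\mathcal{V}$; shrinking the time interval if necessary we arrange that $\Phi_t$ is defined for all $t\in[0,1]$ on $\mathcal{V}$ (the support condition on $X$, hence on $Y_t$, lets us extend the flow to all of $[0,1]$ since outside $\mathcal{W}$ the field vanishes). Then I compute
\[
\frac{d}{dt}\,\Phi_t^*\sigma^t = \Phi_t^*\bigl(L_{Y_t}\sigma^t\bigr) + \Phi_t^*\bigl(\tfrac{\partial}{\partial t}\sigma^t\bigr) = \Phi_t^*\bigl(L_{Y_t}\sigma^t + (\sigma-\sigma_0)\bigr).
\]
By Cartan's formula and $d\sigma^t=0$, $L_{Y_t}\sigma^t = d\,\iota_{Y_t}\sigma^t$. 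Now $\iota_{Y_t}\sigma^t = \sigma^t(Y_t,\cdot) = -\sigma(X,\cdot)$ by the very definition of $Y_t$ (here I use that $\sigma^{tb}_x$ is the map $X\mapsto\sigma^t_x(X,\cdot)$ and that it is an isomorphism onto $H$). Hence $L_{Y_t}\sigma^t = -d\,\iota_X\sigma = -L_X\sigma = -(\sigma-\sigma_0)$ by hypothesis~(2). Therefore $\frac{d}{dt}\Phi_t^*\sigma^t = 0$, so $\Phi_1^*\sigma = \Phi_1^*\sigma^1 = \Phi_0^*\sigma^0 = \sigma_0$, and $\Phi := \Phi_1$ restricted to a small enough neighborhood $\mathcal{V}$ of $0$ is the desired Darboux chart, with $\Phi(0)=0$ and $d\Phi(0)=\mathrm{id}$ (the latter because $Y_t(0)=0$ forces the linearized flow at $0$ to be trivial — or one simply composes with a linear symplectomorphism as in the Banach case).

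**Where the difficulty lies.** The routine part is the Cartan-calculus computation above, which is formally identical to Weinstein's and Bambusi's arguments; the Fréchet subtleties are all quarantined into (a) making sense of $L_{Y_t}$, $d\,\iota_{Y_t}\sigma^t$ and the chain rule $\frac{d}{dt}\Phi_t^*\sigma^t$ on a PLB-manifold — here I would invoke the agreement of PLB-calculus with Kriegl–Michor calculus established in Section~\ref{sec:smooth PLB space}, under which the Cartan formulas are valid — and (b) obtaining the flow. Step (b), the existence and smoothness of the flow of $Y_t$, is the genuine obstacle, and it is precisely what hypotheses~(1)–(3) are engineered to overcome: hypothesis~(1) makes $Y_t$ well-defined, hypothesis~(3) combined with \eqref{alpha} gives the uniform bound $\|Y^i_t\|_i\le M$ needed to invoke Theorem~\ref{thm:IFT}/Theorem~\ref{Flow} with a \emph{uniform} time of existence (without uniformity in $i$ the projective limit of the flows would collapse to an empty interval), and the $\mu$-Lipschitz clause supplies the Lipschitz constant. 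The remaining delicate point is verifying that hypothesis~(4) — smoothness of $x\mapsto(\sigma^b_x)^{-1}$ — propagates to smoothness of $(t,x)\mapsto Y_t(x)$ and hence of the flow $F$ in Theorem~\ref{Flow}; this is where I would be most careful, checking that $Y_t=\varprojlim Y^i_t$ with each $Y^i_t$ smooth on $E_i$ so that the projective-limit smoothness criterion applies.
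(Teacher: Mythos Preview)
Your proposal is correct and follows essentially the same route as the paper's own proof: define $\alpha=\iota_X\sigma$, set $Y_t(x)=-((\sigma^t_x)^b)^{-1}\alpha_x$ with its projective components $Y^i_t$, bound $\|Y^i_t(x_i)\|_i\le M$ via inequality~\eqref{alpha} and hypothesis~(3), apply the suspension trick and Theorem~\ref{Flow} to obtain the isotopy, and conclude with the standard Moser--Cartan computation $\frac{d}{dt}\Phi_t^*\sigma^t=0$. Your write-up is in fact somewhat more careful than the paper's about the role of $X(0)=0$ in keeping the flow near $0$ and about where the $\mu$-Lipschitz hypothesis enters, but the architecture is identical.
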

For rest of discussion we denote $\overline{\sigma}= \sigma-\sigma_0$.  $\overline{\sigma}$ is defined on $\mathcal{W}$, an open neighborhood of 0.

\begin{remark}\label{remark}
Suppose $\sigma$ is a weak symplectic structure  as defined earlier (compatible with projective system) and there exists a vector field $X$ on $\mathcal{W}$ such that $L_X\sigma= \overline{\sigma}$.  Then we have $d(i_X\sigma)= \overline{\sigma}$.  Denote $\alpha= i_X\sigma= \sigma^b(X)$.  We define
$$\alpha_{x_i}^i:= (\sigma^i_{x_i})^b(X_i(x_i))$$
We can extend these $\alpha^i_{x_i}$ as $\alpha^i_{x_i}\in \mathcal{F}_{x_i}^*$. We see that $\psi_{ji}(\alpha^j_{x_j})= \alpha^i_{x_i}$.


Therefore $\varprojlim \alpha^i_{x_i}$ exists and as $\sigma$ is compatible with inductive maps, we have $$\alpha_x= \varprojlim(\alpha_{x_i}^i)$$
\end{remark}

\begin{remark} For any $x_i\in E_i$, $\alpha_{x_i}^i\in E_i^*$. As an element of $E_i^*$, we have
\begin{eqnarray*}\|\alpha^i_{x_i}\|_{op}&= & \sup_{\|Y\|_i=1}|\alpha^i_{x_i}(Y)|\\
&=&\sup_{\|Y\|_i=1}|\sigma^i_{x_i}(X_i(x_i),Y)|\\
&=& \|X_i(x_i)\|_{\mF_{x_i}}
\end{eqnarray*}
But as an an element of $\mathcal{F}_{x_i}^*$, by equation \ref{alpha}, we have
$$\|\alpha^i_{x_i}\|_{op}\leq \|X_i(x_i)\|_i$$
\end{remark}
\subsection{Proof of the theorem }

\begin{proof} There is an open neighborhood of $0\in E$,  $\mathcal{W}= \varprojlim \mathcal{W}_i$.  $0\in \varprojlim \mathcal{W}_i$ is identified with $0= (0,0,..)$.  On $\varprojlim \mathcal{W}_i$, define $\overline{\sigma}= \sigma_0-\sigma$ and  $\sigma^t= \sigma+t\overline{\sigma}$ for $t\in [-1,1,]$.

By remark \ref{remark}, we have on $\varprojlim \mathcal{W}'_i$, $\alpha_x= (\alpha_{x_i}^i)$ and  $$d(\alpha)= \overline{\sigma}.$$

As $\alpha = i_X\sigma$, we have $\alpha\in H$. We want to solve for $Y_t:\mathcal{W}\to E$ such that
$$i_{Y_t}\sigma^t= -\alpha$$
Consider $$(\sigma_x^t)^b: E\to H.$$  For $x\in \varprojlim \mathcal{W}_i$, $(\sigma_x^t)^b$ is isomorphism for all $t$. Hence for $x\in \varprojlim\mathcal{W}_i$,
$$Y_t(x)= ((\sigma_x^t)^b)^{-1}(\alpha_x)\;\;\; \text{ is well defined.}$$
This is a smooth map.

We define  $Y_t^i: \mathcal{U}_i\to E_i$ such that
$$Y_t^i(x_i):= \left(((\sigma_{x_i}^i)^t)^b\right)^{-1}(\alpha^i_{x_i})$$
As $\sigma$ is compatible with inductive limits, we have $Y_t(x)= (Y_t^i(x_i))$, that is to say, $Y_t= \varprojlim Y^i_t$ for each $t$. $Y_t^i$ defined on $\mathcal{U}_i$ is a smooth function ($f= \varprojlim f_i$ is smooth if and only if each $f_i$ is smooth function \cite{galanis1}).

By the definition of compatibility of $\sigma$ with the inductive maps, we have that each $Y_t^i$ is $\mu$-Lipscitz.  We want to make sure that the isotopy of the time dependent vector field $Y_t$ exists. For this we will use theorem \ref{Flow}.    Comparing with the notation of the theorem \ref{thm:IFT}, $p_i(Y_t(x)):= \|Y_t^i(x)\|_i$.

Here  $\|.\|_i$ denotes norm on each $E_i$ and we know that  topology of $E=\varprojlim E_i$ is generated by countable norms $\{\|.\|_i: i\in \mathbb N\}$.

\begin{eqnarray*}
p_i(Y_t(x))&&= \|Y_t^i(x)\|_i\\
&&=\|(((\sigma_{x_i}^i)^t)^b)^{-1}(\alpha_{x_i})\|_i\\
&&\leq \|((\sigma_{x_i}^i)^{tb})^{-1}\|_{op}.\|\alpha^i_{x_i}\|_{op}\\
&&\leq \|((\sigma_{x_i}^i)^{tb})^{-1}\|_{op}.\|X_i(0)\|_{i}\\
&&\leq M
\end{eqnarray*}
We define $\tilde{Y}(x,t):= (Y_t(x), \frac{d}{dt})$ a vector field on $E\times \mathbb R$.  Vector field $\tilde{Y}$ satisfies the condition of the theorem \ref{Flow}. Therefore for each $t$, flow of time dependent vector field $Y_t$ exists.

We have given that $X(0)=0$ this gives $\tilde{Y}(0,t)= (0,\frac{d}{dt})$.  Therefore each flow is defined for all $t\in [-1,1]$ and there exists an isotopy $\phi_t$ for time dependent vector field $Y_t$.

We have:
$$\frac{d}{dt}\phi_t^*\sigma_t= \phi_t^*(L_{Y_t}\sigma_t)+\phi_t^*\frac{d}{dt}\sigma_t$$
$$=\phi_t^*(-d\alpha+\overline{\sigma})=0$$
Hence we have $\phi_1^*\sigma_1=\phi_0^*\sigma_0$.

This gives $$\phi_1^*\sigma= \sigma_0$$
\end{proof}
This proves existence of Darboux chart.


\begin{thebibliography}{99}
\bibitem{bambusi} D. Bambusi: On the Darboux theorem for weak symplectic manifolds. Porc. Amer. Math. Soc. 127 (1999), no. 11, 3383--3391.
\bibitem{galanis1}G. Galanis, Projective limits of Banach-Lie groups, Periodica Mathematica Hungarica 32 (1996), pp. 179--191.
\bibitem{galanis2} G. Galanis, On a type of linear differential equations in Fr\'{e}het spaces, Annali della Scuola Normale Superiore di Pisa, 4 No. 24 (1997), 501--510.
\bibitem{galanis3} G. Galanis, On a type of Fr\'{e}chet principal bundles, Periodica Mathematica Hungarica Vol. 35(1-2), 1997 pp. 15--30
\bibitem{galanis4} G. Galanis, P.K.Palamides, Nonlinear differential equations in Fr\'{e}chet spaces and continuum cross-sections,  Analele sthntifice ale universitatii Al.I. Cuza  IASI TOmul LI, s.I.Matematica, 2005 f.1
\bibitem{omri}H. Omori, Infinite Dimensional Lie Transformation Groups, Lecture Notes in Mathematics 427 (1974), Springer-Verlag.
\bibitem{marseden} J. E. Marsden: Darboux's Theorem Fails for Weak Symplectic Forms. Proc. Amer. Math. Soc. 32, (1972). MR 45:2755
\bibitem{weinstein}A. Weinstein: Symplectic Structures on Banach Manifolds. Bull. Amer. Math. Soc. 75, 1040-- 1041 (1969).
\bibitem{Leslie}J. A. Leslie, On a differential structure for the group of diffeomorphisms, Topology 46 (1967), 263--271
\bibitem{Hamilton}R. S. Hamilton. The Inverse Function Theorem of Nash and Moser. Bulletin of the American Mathematical Society, 7(1), July 1982.
\bibitem{KM} A. Kriegl and P. W. Michor. The Convenient Setting of Global Analysis, Mathematical Surveys and Monographs. American Mathematical Society, 1997.
\bibitem{galanis5} Differential and geometric structure for the tantent bundle of a projective limit manifold,  Rendiconti del seminario matematico dell' Universita di Padova  Vol 112  Anno 2004.
\bibitem{Cartan} H. Cartan,  Differential calculus (1971), Hermann Paris.
\end{thebibliography}
\end{document}